\newcommand{\ignore}[1]{}
\newtheorem{thm}{Theorem}
\newtheorem{lemma}[thm]{Lemma}
\newtheorem{prop}[thm]{Proposition}
\newtheorem{conj}[thm]{Conjecture}
\newtheorem{cor}[thm]{Corollary}
\title{Seymour's Second Neighborhood Conjecture for Subsets of Vertices}
\author{Tyler Seacrest\thanks{The University of Montana Western,
710 S Atlantic St,
Dillon, MT 59725,
United States} \\
\texttt{tyler.seacrest@umwestern.edu}}
\date{\today}
\begin{document}

\maketitle

\abstract{Seymour conjectured that every oriented simple graph contains a vertex whose second neighborhood is at least as large as its first.  In this note, we put forward a  conjecture that we prove is actually equivalent:  every oriented simple graph contains a subset of vertices $S$ whose second neighborhood is at least as large as its first. 

This subset perspective gives some insight into the original conjecture.  For example, if there is a counterexample to the second neighborhood conjecture with minimum degree $\delta$, then there exists a counterexample on at most ${\delta + 1 \choose 2}$ vertices.

Given a vertex $v$, let $d_1^+(v)$ and $d_2^+(v)$ be the size of its first and second neighborhoods respectively.  A digraph is $m$-free if there is no directed cycle on $m$ or fewer vertices.  Let $\lambda_m$ be the largest value such that every $m$-free graph contains a vertex $v$ with $d_2^+(v) \geq \lambda_m d_1^+(v)$.   The second neighborhood conjecture implies $\lambda_m = 1$ for all $m \geq 2$.   Liang and Xu provided lower bounds for all $\lambda_m$, and showed that $\lambda_m \to 1$ as $m \to \infty$.   We improve on Liang and Xu's bound for $m \geq 3$ using this subset perspective.

\textbf{Keywords}:  Seymour's Second Neighborhood Conjecture, cycles, digraphs 

\textbf{AMS Mathematics Subject Classification}:  05C20}

\section{Introduction}

Unless otherwise noted, all digraphs in this paper are oriented simple graphs, and thus do not contain loops or two-cycles.    We will use $V(D)$ to denote the set of vertices of a digraph $D$.

Given a digraph $D$ and vertices $u$ and $v$, let $d(u, v)$ be the length of the shortest directed path from $u$ to $v$.   For this note, we consider $d(v, v)$ not to be zero, but the length of the shortest cycle containing $v$.  Let $N_k^+(v)$, the set of $k$th out-neighbors, be all vertices $u$ such that $d(v, u) = k$, and note that these sets are disjoint for fixed $v$.   We will use $N_k^-(v)$ to refer to the set of $k$th in-neighbors, defined analogously to out-neighbors.  For fixed $v$, the $N_k^-(v)$ are disjoint, though they may intersect with the $N_k^+(v)$.   Also, since we consider $d(v, v) \neq 0$,  $N_k^+(v)$ contains $v$ for some $k > 0$.   Let $d_k^+(v) = |N_k^+(v)|$ and $d_k^-(v) = |N_k^-(v)|$.  If $d_1^+(v) \leq d_2^+(v)$, we will call $v$ a \emph{Seymour vertex}.  For a set of vertices $S$, let $N_k^+(S)$ be all vertices $u$ such that $\min_{s \in S} d(s, u) = k$, and note that $N_1^+(S)$, $N_2^+(S)$, etc. are all disjoint. Again, because we have defined $d(v, v) \neq 0$, it is possible $S$ intersects with $N_k^+(S)$.   Define $d_k^+(S) = |N_k^+(S)|$.  

Seymour made the following conjecture, which has become known as Seymour's Second Neighborhood Conjecture.

\begin{conj}[Seymour, see~\cite{DeanLatka95}]
\label{conj:original}
Every oriented simple graph contains a Seymour vertex.
\end{conj}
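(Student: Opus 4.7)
The plan is to exploit the equivalent subset reformulation that the abstract promises rather than attack the vertex statement directly. Once one has shown that some set $S \subseteq V(D)$ satisfies $d_2^+(S) \geq d_1^+(S)$, and that this condition is equivalent to the existence of a Seymour vertex, Conjecture~\ref{conj:original} follows. The subset version is plausibly easier because one has the freedom to choose $S$ to align with local structure, and because inserting or removing a single vertex from $S$ shifts controlled amounts of mass between $N_1^+(S)$ and $N_2^+(S)$, which invites an extremal or greedy attack that is unavailable at a single vertex.

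First I would look for a ``median set'' analogue of the Havet--Thomass\'e median-order argument that resolves the tournament case. Fixing a linear order $v_1, \ldots, v_n$ on $V(D)$ and considering the prefixes $S_i = \{v_1, \ldots, v_i\}$, I would track how $d_1^+(S_i)$ and $d_2^+(S_i)$ evolve as $i$ grows. If no prefix is a witness, then adding each next vertex must contribute strictly more to the first neighborhood than to the second; one then searches for a local swap on the order that strictly decreases some potential, so that a minimizing order must produce a witness, and one deduces via the equivalence that the digraph contains a Seymour vertex.

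In parallel I would try a global double-counting argument on subsets. Summing $d_2^+(S) - d_1^+(S)$ over all sets $S$ of a fixed size, or weighted by a carefully chosen probability distribution on subsets, should be nonnegative because oriented simple graphs contain many directed paths of length two relative to the number of edges. The key challenge is converting such a nonnegative expectation into the existence of a single concrete witnessing $S$, which requires care because, under the convention $d(v,v) \neq 0$, the set $N_2^+(S)$ can intersect $S$ itself and the counting is not straightforward.

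The hard part is of course that Conjecture~\ref{conj:original} is a famous open problem that has resisted every general attack since the 1990s. Any averaging scheme must contend with the fact that $N_2^+(S)$ is allowed to intersect $S$, so naive inclusion--exclusion both over- and under-counts in ways that conspire against a clean global bound. Meanwhile any extremal argument faces the usual obstruction that deleting or contracting destroys the very out-degree structure one is trying to exploit. I expect genuine progress via the subset perspective to require combining extremality of $|S|$, of the order, or of a minimal counterexample with a structural analysis of near-extremal configurations, mirroring the pattern that succeeded for tournaments but that has so far failed to extend to general oriented graphs.
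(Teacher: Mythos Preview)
The statement you are attempting to prove is a \emph{conjecture}, and the paper does not prove it. The paper explicitly presents Conjecture~\ref{conj:original} as open, noting that it ``along with related conjectures \ldots\ have remained open for decades.'' What the paper actually establishes is (i) that Conjecture~\ref{conj:original} is equivalent to the subset version (Conjecture~\ref{conj:subset}), via Lemma~\ref{lemma:contract}, and (ii) partial and approximate results such as Theorem~\ref{thm:m-free}. So there is no ``paper's own proof'' to compare against.

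Your proposal is not a proof either, and you seem to know this: you write that the conjecture ``is a famous open problem that has resisted every general attack since the 1990s.'' What you have offered is a research outline---try a median-order analogue on prefixes, try averaging over subsets---with no claim that either step goes through. In particular, your plan to first prove the subset version and then invoke the equivalence cannot succeed as stated, because the equivalence runs both ways: Conjecture~\ref{conj:subset} is exactly as open as Conjecture~\ref{conj:original}. The paper's Lemma~\ref{lemma:contract} shows precisely that a minimal counterexample to the vertex statement is already a counterexample to the subset statement, so the subset formulation does not give you a free foothold. Your obstacles paragraph is accurate, but an accurate list of obstacles is not a proof.
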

We will use SNC to refer to this conjecture throughout this note.

The SNC, along with related conjectures of Caccetta and H\"aggvist~\cite{CaccettaHaggkvist78} and Ho\'ang and Reed~\cite{HoangReed87}, have remained open for decades.  (See Sullivan~\cite{Sullivan06} for a nice summary of results and conjectures related to the Caccetta-H\"aggvist conjecture.)   In this note, we introduce a new, related conjecture.  
\begin{conj}
\label{conj:subset}
Every oriented simple graph $D$ contains a non-empty, proper subset of the vertices $S$, such that $d_1^+(S) \leq d_2^+(S)$.
\end{conj}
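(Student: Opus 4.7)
The plan is to attempt Conjecture \ref{conj:subset} by producing a combinatorial candidate for $S$ that exploits the failure of the single-vertex Seymour property. I would first dispose of the easy cases. If any vertex $v$ satisfies $d_1^+(v) \leq d_2^+(v)$, the singleton $S = \{v\}$ already works, and the same holds trivially when $v$ is a sink. So the only real task is to construct a valid $S$ under the blanket assumption that $d_1^+(v) > d_2^+(v)$ for every $v \in V(D)$.

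A natural first candidate to test is $S = N_1^+(v)$ for a well-chosen vertex $v$. The key structural observation is that $N_1^+(S) \subseteq S \cup N_2^+(v)$, since every out-neighbor of an out-neighbor of $v$ lies within distance $2$ of $v$, while $N_2^+(S)$ contains every vertex of $N_3^+(v)$ not already in $N_1^+(v) \cup N_2^+(v)$. Hence verifying $d_1^+(S) \leq d_2^+(S)$ reduces to a bound of roughly the form $d_3^+(v) \geq d_1^+(v) + d_2^+(v)$ for a judicious choice of $v$, for instance the vertex minimizing $d_1^+(v) + d_2^+(v) - d_3^+(v)$ or some extremal vertex from the condensation of $D$. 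One would then iterate: if no out-neighborhood works, try second out-neighborhoods, or in-neighborhoods, or unions of the form $\{v\} \cup N_1^+(v)$, hoping a finite menu of candidates covers all digraphs.

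A complementary route is induction on $|V(D)|$. For a minimum counterexample $D$, every non-empty proper subset satisfies $d_1^+(S) > d_2^+(S)$. Deleting a well-chosen vertex $w$ (a source, a vertex of minimum out-degree, or the last vertex of a median order) yields a smaller digraph $D - w$, which by minimality contains a valid subset $S'$; one then tries to show that $S'$ or $S' \cup \{w\}$ is valid in $D$. The obstacle here is the lack of an obvious monotonicity: inserting $w$ can simultaneously add vertices to $N_1^+$ (new out-neighbors of $w$) and remove them from $N_2^+$ (vertices promoted to the first neighborhood), so the net effect on $d_1^+ - d_2^+$ need not be controlled locally by $w$.

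The truly hard step, and what I expect to be the principal obstacle, is that Conjecture \ref{conj:subset} is (as stated in the abstract) equivalent to the SNC itself, so any successful proof must resolve that long-standing problem. I would accordingly devote most effort to a weighted relaxation: searching for a nonnegative weighting $w \colon V(D) \to [0,1]$ such that $\sum_v w(v) d_2^+(v) \geq \sum_v w(v) d_1^+(v)$ is forced by a median-order, discharging, or LP-duality argument, so that the support (or an appropriate level set) of $w$ furnishes the subset $S$. Turning such a fractional inequality into an honest set $S$ with the exact neighborhood accounting in Conjecture \ref{conj:subset} is where the argument would have to grapple with the full depth of SNC.
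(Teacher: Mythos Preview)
The statement you are attempting is a \emph{conjecture} in the paper, not a theorem, and the paper does not prove it. What the paper establishes is that Conjecture~\ref{conj:subset} is \emph{equivalent} to the original SNC: the implication from SNC to Conjecture~\ref{conj:subset} is trivial (a Seymour vertex gives a singleton $S$), and the reverse implication is the content of Lemma~\ref{lemma:contract}, which shows that in any edge-minimal $\lambda$-counterexample to the SNC, every subset $S$ with $N_1^+(S)$ non-empty already satisfies $d_2^+(S) < \lambda d_1^+(S)$. There is therefore no ``paper's own proof'' of Conjecture~\ref{conj:subset} to compare your attempt against.

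Your proposal is not a proof either, and you are candid about this: it is a survey of plausible opening moves (singletons, $S = N_1^+(v)$, induction on $|V(D)|$, weighted or LP relaxations), each accompanied by a correct diagnosis of where it stalls. Your final paragraph explicitly concedes that any successful argument would resolve the SNC, which is precisely the equivalence the paper proves via Lemma~\ref{lemma:contract}. So your proposal and the paper agree on the status of the problem; neither supplies a proof of Conjecture~\ref{conj:subset}, and absent a breakthrough on the SNC itself, none should be expected. The genuine gap is not a local flaw in your reasoning but the fact that you have set out to prove an open problem equivalent to a decades-old conjecture.
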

Note that Conjecture~\ref{conj:subset} is clearly implied by the SNC, since if there is a Seymour vertex $v$, then we can simply let $S = \{v\}$ and Conjecture~\ref{conj:subset} follows.  We prove Conjecture~\ref{conj:subset} is actually equivalent to the SNC.  This follows from a lemma we prove in Section~\ref{sec:lemma}.   There may be some hope that Conjecture~\ref{conj:subset} is easier to prove than the SNC:  for example, Conjecture~\ref{conj:subset} has an easy proof for regular graphs (see Proposition~\ref{prop:reg}), a case that has received much attention but has yet to yield a proof for the SNC.   

Since $N_1^+(S)$ is a set which when removed disconnects the graph, it is possible Conjecture~\ref{conj:subset} is related to the isoperimetric method of Hamidoune. Using the isoperimetric method,  Hamidoune~\cite{hamidoune08} proved the SNC for vertex-transitive graphs, and later Llad\'o~\cite{llado13} proved the SNC for $r$-out-regular graphs of connectivity $r-1$.

In attempt to make progress on the SNC, Chen, Shen, and Yuster~\cite{ChenShenYuster03} posed the following problem:  Find the largest $\lambda$ such that one could prove the existence of a vertex $v$ such that 
\begin{equation}
\label{eqn:app}
d_2^+(v) \geq \lambda d_1^+(v).
\end{equation} 
They proved this approximate form of the SNC  for $\lambda \approx 0.6573 \ldots$, where the exact value of $\lambda$ is the real root of the equation $2 x^3 + x^2 - 1 = 0$.  They also claimed that $\lambda \approx 0.67815 \ldots$ was achievable with similar methods.

A digraph is $m$-free if it has no directed cycles with length at most $m$.    One can then ask the Chen, Shen, and Yuster question in regards to this restricted set of digraphs.   Let $\lambda_m$ be the largest value such that every $m$-free digraph has a vertex $v$ where $d_2^+(v) \geq \lambda_m d_1^+(v)$.   The second neighborhood conjecture implies $\lambda_m = 1$ for all $m \geq 2$. Zhang and Zhou~\cite{ZhangZhou10} showed $\lambda_3 \geq 0.6751$.    Liang and Xu~\cite{LiangXu17} improved this and extended the result for all $m$, showing that $\lambda_m$ is greater than the only real root in the interval $(0, 1)$ of the polynomial
$$
2x^3 - (m-3)x^2 + (2m-4)x - (m-1).
$$
This implies $\lambda_3 \geq 0.6823 \ldots$, which improved the Zhang and Zhou result for $\lambda_3$.   The bound on $\lambda_4$ was $0.7007 \ldots$, and in general, $\lambda_m \to 1$ as $m \to \infty$.


To improve upon Liang and Xu's bounds, this note proves the following:

\begin{thm}
\label{thm:m-free}
The unique positive real root of $x^m + x^{m-1} = 1$ is a lower bound on $\lambda_m$.
\end{thm}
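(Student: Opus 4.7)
The plan is to argue by contradiction, using the subset reformulation in Conjecture~\ref{conj:subset} as the main lever. Suppose every vertex $v$ of the $m$-free digraph $D$ satisfies $d_2^+(v) < \lambda d_1^+(v)$, where $\lambda$ is the positive real root of $x^m + x^{m-1} = 1$. I would first check that the equivalence lemma from Section~\ref{sec:lemma} extends to the approximate setting, which would promote the vertex hypothesis to the subset hypothesis that no non-empty proper subset $S$ of $V(D)$ satisfies $d_2^+(S) \geq \lambda d_1^+(S)$.

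Fix any vertex $v$ and write $n_k = d_k^+(v)$. Since $D$ is $m$-free, the BFS layers $N_1^+(v), \ldots, N_{m-1}^+(v)$ are pairwise disjoint and disjoint from $\{v\}$, because every directed cycle through $v$ has length at least $m+1$. For each $j \in \{0, 1, \ldots, m-2\}$, I form the ball subset $S_j = \{v\} \cup N_1^+(v) \cup \cdots \cup N_j^+(v)$. A direct calculation---using $m$-freeness to rule out any edge from $S_j$ to $v$, which would close a cycle of length at most $m$---gives
\[
d_1^+(S_j) = n_1 + n_2 + \cdots + n_{j+1}, \qquad d_2^+(S_j) = n_{j+2}.
\]
Applying the subset hypothesis to each $S_j$ produces the cascade $n_{k+1} < \lambda(n_1 + \cdots + n_k)$ for $k = 1, 2, \ldots, m-1$. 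With $T_k = n_1 + \cdots + n_k$ this is the recursion $T_{k+1} < (1 + \lambda) T_k$, and iterating gives $T_k < (1 + \lambda)^{k-1} n_1$.

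The final and most delicate step is to combine this cascade with the identity $\lambda^{m-1}(1 + \lambda) = 1$ (equivalent to $\lambda^m + \lambda^{m-1} = 1$) to extract a contradiction. This is the main obstacle I expect, because the cascade alone admits the geometric solution $n_k \propto (1+\lambda)^{k-2}$ and is not yet inconsistent. My plan is to apply the subset hypothesis to one further subset---most naturally the extended ball $S_{m-1}$, whose $d_2^+$ calculation is the first to involve vertices that would close a cycle of length $m+1$ back to $v$, so that $m$-freeness participates beyond merely keeping the BFS layers disjoint---and then form a weighted combination of the cascade inequalities with weights that are consecutive powers of $\lambda$. The defining equation of $\lambda$ should then arise as the telescoping identity that forces the combined inequality to reduce to $\lambda^m + \lambda^{m-1} > 1$, contradicting the choice of $\lambda$.
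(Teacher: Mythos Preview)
Your proposal has a genuine gap, and you have in fact already put your finger on it: the cascade $n_{k+1} < \lambda T_k$ (or even the sharper $n_{k+1} < \lambda n_k$ that the paper extracts from Lemma~\ref{lemma:contract}) only gives \emph{upper} bounds on the layer sizes, and a geometrically decaying sequence satisfies all of them comfortably. Your proposed remedy does not supply the missing lower bound. Applying the subset hypothesis to $S_{m-1}$ yields nothing qualitatively new: one simply gets $n_{m+1} < \lambda T_m$ (possibly with a $+1$ from $v$ landing in $N_2^+(S_{m-1})$), which is just the cascade extended by one step. The ``participation of $m$-freeness'' you hope for at that stage goes the wrong way---cycles of length $m+1$ can only \emph{reduce} $d_2^+(S_{m-1})$ by folding would-be second neighbors back into earlier layers, which makes the inequality easier, not harder, to satisfy. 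No weighted telescoping of one-sided inequalities will manufacture a contradiction; you need an inequality pointing the other direction.

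The paper obtains that opposing inequality by an entirely different mechanism that your outline does not contain. First, it uses the sharper decay $d_{i+1}^+(v) < \lambda\, d_i^+(v)$ to show that $d_m^+(v) < \gamma\,|A_v|$ for every $v$, where $A_v = \bigcup_{i=1}^{m-1} N_i^+(v)$ and $\gamma = \lambda^{m-1}(1-\lambda)/(1-\lambda^{m-1})$; your weaker cascade would only give $\gamma = \lambda$, which is not good enough. Second---and this is the key global step---it averages over all vertices (using $\sum d_i^+ = \sum d_i^-$) to locate a vertex $w$ with $d_m^-(w) < \gamma\,|B_w|$, where $B_w = \bigcup_{i=1}^{m-1} N_i^-(w)$. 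Third, it sets $S^* = \bigcup_{i \ge m+1} N_i^-(w)$ and observes that strong connectivity forces the forward layers $N_2^+(S^*), N_3^+(S^*), \ldots$ to cover $B_w$; since $|N_1^+(S^*)| \le d_m^-(w) < \gamma |B_w|$ and each subsequent layer shrinks by a factor $\lambda$, the covering requirement forces $\frac{\lambda}{1-\lambda}\gamma > 1$, i.e.\ $\lambda^m + \lambda^{m-1} > 1$. The averaging step and the covering-by-strong-connectivity step are both essential and absent from your plan; an argument that stays at a single vertex's out-neighborhoods cannot close.
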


In other words, an $m$-free digraph $D$ will have a vertex $v$ such that $d_2^+(v) \geq \lambda d_1^+(v)$ for any nonnegative $\lambda$ satisfying $\lambda^m + \lambda^{m-1} \leq 1$.

For $2$-free digraphs, Theorem~\ref{thm:m-free} gives the golden ratio of $\lambda_2 \geq .6180 \ldots$, which is not as good as the Chen, Shen, and Yuster result.  However, Theorem~\ref{thm:m-free} gives $\lambda_3 \geq .7548 \ldots$ and $\lambda_4 \geq 0.8191 \ldots$, which does improve upon the Liang and Xu result.   In fact, our result provides the best-known bound for all $m \geq 3$.  Note that the Liang and Xu result  asymptotically gives a lower bound of $1 - \sqrt{2} \frac{1}{\sqrt{m}} + o\left( \frac{1}{\sqrt{m}} \right)$, while our result asymptotically gives a lower bound of $1 - \ln(2) \frac{1}{m} + o\left( \frac{1}{m} \right)$.

\section{Main Lemma}
\label{sec:lemma}

We say $D$ is a \emph{$\lambda$-counterexample} (to the SNC) if $d_2^+(v) < \lambda d_1^+(v)$ for all vertices $D$.   We say $D$ is an \emph{edge-minimal} $\lambda$-counterexample if one cannot remove edges to create a smaller $\lambda$-counterexample.  We say $D$ is \emph{minimal} $\lambda$-counterexample if one cannot remove edges, vertices, or both to create a smaller counterexample.  We need to discuss $d_k^+(v)$ for different digraphs in this proof, so let $d_k^{+}(v, D)$ represent the number of $k$ out-neighbors of $v$ specifically in graph $D$.

The following lemma says, starting with a counterexample to the SNC, one can remove edges so that $d_2^+(S) < \lambda d_1^+(S)$ for all subsets of vertices $S$ where $N_1^+(S)$ is non-empty.   If $N_1^+(S)$ is empty and $S$ is a proper, non-empty subset of the vertices, then the graph is no longer strongly connected, and hence we can remove vertices to create an even smaller counter example.  By removing edges and vertices, we can create a strongly connected counterexample to the SNC where $d_2^+(S) < \lambda d_1^+(S)$ as long as $S$ is non-empty and and not equal to $V(D)$.     This show that Conjecture~\ref{conj:subset} is equivalent to Conjecture~\ref{conj:original}.

\begin{lemma}
\label{lemma:contract}
Let $D$ be a edge minimal $\lambda$-counterexample to the SNC, and let $S$ be any subset of the vertices of $D$ such that $N_1^+(S)$ is non-empty.  Then $d_2^+(S) < \lambda d_1^+(S)$.
\end{lemma}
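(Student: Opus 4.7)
\noindent The plan is a proof by contradiction: suppose some $S \subseteq V(D)$ satisfies $N_1^+(S) \neq \emptyset$ and $d_2^+(S) \geq \lambda d_1^+(S)$, and exhibit $D' \subsetneq D$ (same vertex set, at least one fewer edge) that is still a $\lambda$-counterexample, contradicting edge-minimality. The starting point is two set identities: $N_1^+(S) = \bigcup_{s \in S} N_1^+(s)$ (immediate from the definition), and $N_2^+(S) \subseteq \bigcup_{s \in S} N_2^+(s)$ (if $u \in N_2^+(S)$ then no $s' \in S$ has $s' \to u$, so $u \notin N_1^+(s')$ for every $s'$, and the witness $s$ for $d(s, u) = 2$ has $u \in N_2^+(s)$). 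These yield the sum bounds
\[
d_1^+(S) \leq \sum_{s \in S} d_1^+(s), \qquad d_2^+(S) \leq \sum_{s \in S} d_2^+(s),
\]
with equality in the first bound holding exactly when no vertex of $N_1^+(S)$ receives two or more edges from $S$.

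\medskip

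\noindent Summing the $\lambda$-counterexample condition $d_2^+(s) < \lambda d_1^+(s)$ over $s \in S$ yields $d_2^+(S) \leq \sum_{s \in S} d_2^+(s) < \lambda \sum_{s \in S} d_1^+(s)$. If $d_1^+(S) = \sum_s d_1^+(s)$ (no overlap) this already gives $d_2^+(S) < \lambda d_1^+(S)$, contradicting the assumed violation. Hence some $v \in N_1^+(S)$ must receive at least two in-edges from $S$, say $s_1 \to v$ and $s_2 \to v$. I would then consider $D' = D - (s_1, v)$. Because $v$ still has the in-edge $s_2 \to v$, one verifies $N_1^+(S, D') = N_1^+(S, D)$, and any length-two path from $S$ to a vertex of $N_2^+(S, D)$ that used the removed edge can be rerouted through $s_2 \to v$, giving $N_2^+(S, D') = N_2^+(S, D)$. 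Thus the violation $d_2^+(S, D') \geq \lambda d_1^+(S, D')$ persists in $D'$. Moreover, for every $w \neq s_1$, $d_1^+(w, D') = d_1^+(w, D)$ and $d_2^+(w, D') \leq d_2^+(w, D)$, so $w$ remains non-Seymour; the only candidate Seymour vertex in $D'$ is $s_1$ itself.

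\medskip

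\noindent The hard step is guaranteeing that $s_1$ also stays non-Seymour in $D'$, which would make $D'$ a $\lambda$-counterexample with strictly fewer edges than $D$ and produce the desired contradiction. Removing $(s_1, v)$ drops $d_1^+(s_1)$ by one while possibly promoting $v$ itself to a new 2-out-neighbor of $s_1$ (whenever a length-two path $s_1 \to x \to v$ with $x \neq v$ exists in $D$), so a naive single-edge removal might just barely push $s_1$ across the $\lambda$-threshold. I expect the argument to be finished by exploiting the freedom in choosing the overlap edge: either through an averaging argument over the at least two candidate sources $s_i$ with $s_i \to v$ to exhibit one whose removal is safe, by deleting a coordinated set of overlap edges simultaneously so that the sum bound applied inside $D'$ keeps every affected $s \in S$ below the Seymour threshold, or by passing to an extremal $S$ (for instance, one maximizing $d_2^+(S) - \lambda d_1^+(S)$) in which the promotion of $v$ to a 2-out-neighbor of $s_1$ is structurally ruled out.
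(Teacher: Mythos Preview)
Your single-edge deletion is blocked by the very hypothesis you are trying to exploit. You correctly show that after removing $(s_1,v)$ the only vertex that could satisfy $d_2^+ \geq \lambda d_1^+$ is $s_1$ itself. But edge-minimality of $D$ says exactly that $D-e$ fails to be a $\lambda$-counterexample for \emph{every} edge $e$; combined with your observation, this forces $d_2^+(s_1,D') \geq \lambda d_1^+(s_1,D')$ automatically, independent of any assumption on $S$. So there is nothing left to prove and nothing the hypothesis on $S$ can do for you: every single-edge removal necessarily makes its tail cross the threshold. In particular your averaging idea over the sources $s_i\to v$ cannot succeed, since each candidate provably fails rather than merely possibly failing; and choosing an extremal $S$ does not touch this obstruction, which is purely local to the deleted edge.

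The paper's proof escapes this by deleting a large batch of edges at once, with the batch determined by an extremal object living inside $N_1^+(S)$ rather than inside $S$. One takes $T\subseteq N_1^+(S)$ maximal subject to $\lambda|T| > |N_1^+(T)\setminus S|$ (or $T=\emptyset$ if no such subset exists). If $T=N_1^+(S)$ the inequality $d_2^+(S)<\lambda d_1^+(S)$ is immediate. Otherwise set $T'=N_1^+(S)\setminus T$ and delete \emph{all} edges from $S$ into $T'$. If some $v\in S$ now satisfies $d_2^+(v,D')\geq \lambda d_1^+(v,D')$, one compares what $v$ lost in $N_1^+$ against what it lost or gained in $N_2^+$, and shows that adjoining $v$'s lost first-neighbors to $T$ yields a strictly larger set still satisfying $\lambda|T|>|N_1^+(T)\setminus S|$, contradicting maximality. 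The key idea you are missing is this extremal $T$ inside $N_1^+(S)$ governing a wholesale deletion; your ``coordinated set of overlap edges'' is pointing in the right direction but without the maximal-$T$ device there is no leverage to control the affected vertices of $S$.
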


\begin{proof}
Choose a subset of vertices $T \subset N_1^+(S)$ to be maximal such that $\lambda|T| > |N_1^+(T) \setminus S|$, or $T = \emptyset$ if no such $T$ exists.  If $T = N_1^+(S)$, then $\lambda d_2^+(S) = \lambda |T| > |N_1^+(T) \setminus S| = d_2^+(S)$  and we are done.  So assume $T \subsetneq N_1^+(S)$.  Set $T' = N_1^+(S) - T$, and note $T'$ is non-empty since $N_1^+(S)$ is non-empty, and $T \neq N_1^+(S)$. 

Now create a new graph $D'$ equal to $D$ but with all edges from $S$ to $T'$ removed.  We claim that $D'$ is an $\lambda$-counterexample to the SNC, contradicting the minimality of $D$.  Suppose $D'$ is not an $\lambda$-counterexample, so it has $v$ such that $d_2^+(v, D') \geq \lambda d_1^+(v, D')$.  Since we only removed outgoing edges from vertices in $S$, $v$ must be in $S$.  

Let $A$ be the set of first out-neighbors of $v$ in $D$ that are not first or second out-neighbors of $v$ in $D'$.  Similarly, let $B$ be the set of first outneighbors of $v$ in $D$ that are second out-neighbors of $v$ in $D'$.   Using out-neighborhoods within $D$, let $$C = N_1^+(A \cup B) \setminus (S \cup T \cup N_1^+(T)),$$ and note that every vertex in $C$ is a second out-neighbor of $v$ in $D$ but not in $D'$.  See Figure~\ref{fig:lemma-diagram} for a diagram of some of these sets.  We  have that $v$ satisfies $d_2^{+}(v, D) < \lambda d_1^{+}(v, D)$ in $D$ and satisfies $d_2^{+}(v, D') \geq \lambda d_1^{+}(v, D')$ in $D'$,  we have that
\begin{align*}
\lambda d_1^{+}(v, D) & >  d_2^{+}(v, D) \\ 
\lambda (d_1^{+}(v, D) - d_1^{+}(v, D')) & > d_2^{+}(v, D) - d_2^{+}(v, D') \\
\lambda(|A| + |B|) & > |C| - |B|.
\end{align*}

\begin{figure}
\begin{center}
\includegraphics[scale=0.6]{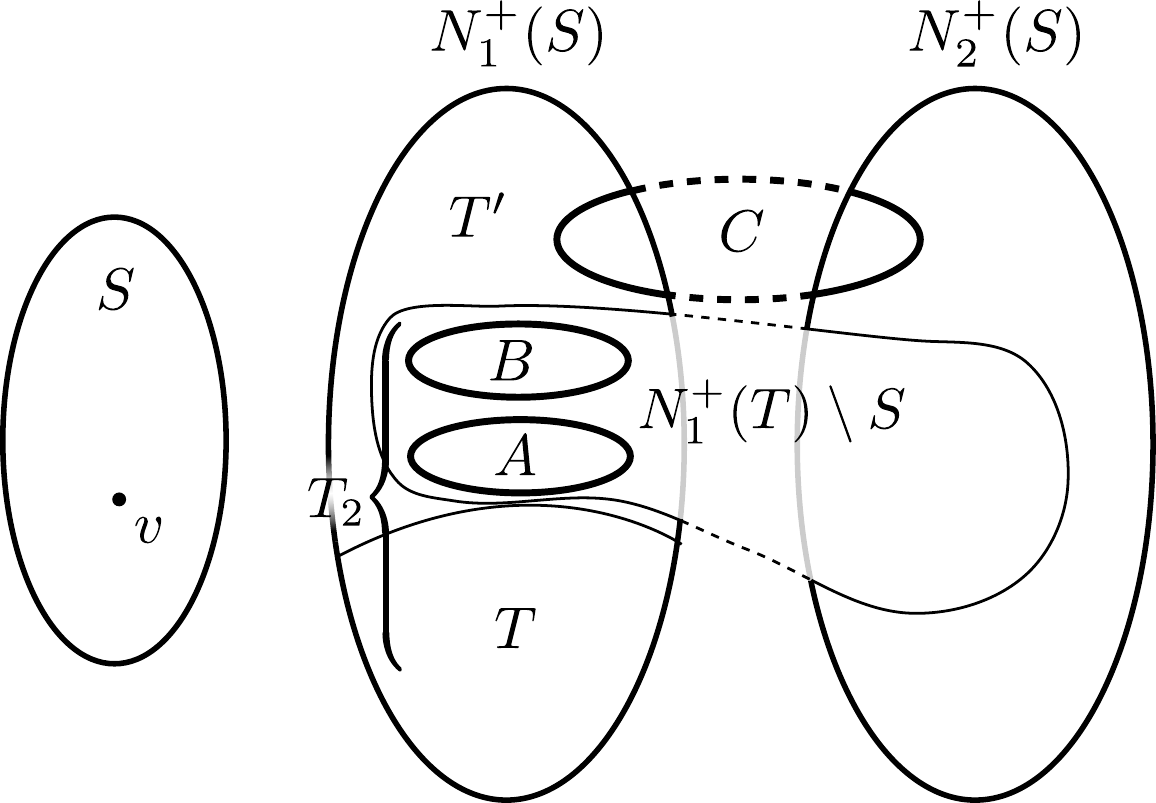}
\end{center}
\caption{\label{fig:lemma-diagram}  A diagram of some of the sets used in the proof of Lemma~\ref{lemma:contract}.}
\end{figure}

Set $T_2 = T \cup A \cup B$.   Since $B$ consists of second out-neighbors of $v$ in $D'$, but we removed all edges from $S$ to $B$, it must be the case that the vertices of $B$ are second out-neighbors of $v$ through $T$.  In other words, the vertices of $B$ lie inside $N_1^+(T)$.  Based on this fact about $B$ and how $C$ was defined, we have $|N_1^+(T_2) \setminus S| \leq |N_1^+(T) \setminus S| + |C| - |B|$.  By assumption, $|N_1^+(t) \setminus S| \leq \lambda |T|$, and we also have $|C| - |B| < \lambda(|A| + |B|)$.  Hence 
\begin{align*}
|N_1^+(T_2) \setminus S| & \leq |N_1^+(T) \setminus| + |C| - |B| \\
& <  \lambda(|T|) + \lambda(|A| + |B|) = \lambda|T_2|.
\end{align*}
But this contradicts the maximality of $T$.  
\end{proof}

\section{Quick Results}

%

Lemma~\ref{lemma:contract} leads to a quick corollary using $\lambda = 1$.

\begin{cor}
If there exists a counterexample $D$ to the SNC with minimum degree $\delta$, then there exists a counterexample with at most ${\delta + 1 \choose 2}$ vertices.
\end{cor}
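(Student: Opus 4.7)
The plan is to apply Lemma~\ref{lemma:contract} with $\lambda=1$ and then extract a layered bound on the vertex count from a minimum-out-degree vertex. Start with the given counterexample $D$ of minimum degree $\delta$. Iteratively remove edges (via the construction used in the proof of Lemma~\ref{lemma:contract}), and whenever strong connectivity breaks, drop a component; the end product is a strongly connected edge-minimal counterexample $D^\ast$. Neither operation can raise any vertex degree, so the minimum degree $\delta^\ast$ of $D^\ast$ satisfies $\delta^\ast\le\delta$, and since $\binom{\delta^\ast+1}{2}\le\binom{\delta+1}{2}$ it suffices to bound $|V(D^\ast)|$ by $\binom{\delta^\ast+1}{2}$. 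In $D^\ast$, the lemma guarantees $d_2^+(S)<d_1^+(S)$ for every proper non-empty $S\subseteq V(D^\ast)$, since strong connectivity makes $N_1^+(S)$ non-empty for such $S$.

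Next, pick $v\in V(D^\ast)$ with $d_1^+(v)=\delta^\ast$, and let $L_k$ denote the set of vertices at BFS distance exactly $k$ from $v$. Applying the lemma to the singleton $\{v\}$ gives $|L_2|\le|L_1|-1=\delta^\ast-1$ immediately. To push this to later layers, apply the lemma to the closed balls $S_k=L_0\cup L_1\cup\cdots\cup L_k$. Because $L_{k+2}\subseteq N_2^+(S_k)$ (any shortest path of length $k+2$ from $v$ has its pre-predecessor in $L_k\subseteq S_k$), the lemma gives $|L_{k+2}|<|N_1^+(S_k)|$. The target is the layer-by-layer bound $|L_k|\le\delta^\ast-k+1$ for $k=1,\dots,\delta^\ast$, which telescopes into the desired $\binom{\delta^\ast+1}{2}$ vertex count.

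The main obstacle is controlling $|N_1^+(S_k)|$. A priori this set contains not only the next layer $L_{k+1}$ but also vertices inside $S_k$ that receive back-edges from other vertices of $S_k$, and these extra contributions can inflate $|N_1^+(S_k)|$ far beyond $|L_{k+1}|$ and destroy the clean layer decrement. The hard part of the proof is to absorb these back-edge contributions using the edge-minimality of $D^\ast$: each edge is essential for preventing a Seymour vertex, which tightly limits how densely back-edges can appear among the early layers. Turning this structural observation into a combinatorial accounting that charges back-edges against layer sizes, so that the clean bound $|L_k|\le\delta^\ast-k+1$ survives, is where the subset viewpoint provided by Lemma~\ref{lemma:contract} is doing the real work.
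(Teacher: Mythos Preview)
Your proposal is not a proof: the final paragraph announces a ``hard part'' --- charging back-edge contributions against layer sizes so that $|L_k|\le\delta^\ast-k+1$ survives --- and then stops without doing it. Everything before that paragraph is setup; the inequality you actually need is never established. A proof cannot end with ``turning this observation into a combinatorial accounting \ldots\ is where the real work is done.''

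Compare this with the paper's argument, which is two lines and contains no such accounting step. Having passed to an edge-minimal counterexample, the paper applies Lemma~\ref{lemma:contract} with $S=\bigcup_{i=1}^{k-1}N_i^+(v)$ and reads off $d_{k+1}^+(v)<d_k^+(v)$ directly; iterating gives $\delta\ge d_1^+(v)>d_2^+(v)>\cdots$, so $d_k^+(v)\le\delta-k+1$ and summing yields $\binom{\delta+1}{2}$. The point you are missing is that when $S$ is a BFS ball around $v$, the sets $N_1^+(S)$ and $N_2^+(S)$ in the sense used throughout the paper \emph{are} the next two BFS layers $N_k^+(v)$ and $N_{k+1}^+(v)$, so the lemma is already comparing consecutive layers. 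There is no inflation to absorb, and edge-minimality plays no role beyond making Lemma~\ref{lemma:contract} available in the first place. Your description of $N_1^+(S_k)$ as ``the next layer $L_{k+1}$ plus vertices inside $S_k$ that receive back-edges'' does not match how these sets behave here; once this is straightened out the obstacle disappears and the proof is immediate.
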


\begin{proof}
Let $D$ be a counterexample to the SNC with minimum degree $\delta$.  We can assume $D$ is edge-minimal, as that does not affect the number of vertices and at worse lowers the minimum degree.   Thus Lemma~\ref{lemma:contract} applies. 

Let $v$ be a vertex of minimum degree.  Using $S = \bigcup_{i = 1}^{k-1} N_i^+(v)$, we see Lemma~\ref{lemma:contract} gives $d_k^+(v) > d_{k+1}^+(v)$ for $k = 1, 2, 3, \ldots, \ell$, where $\ell$ is defined to be the first neighborhood of $v$ where $N_{\ell+1}^+(v)$ is empty.    Define  $U = \bigcup_{i = 1}^\ell N_i^+(v)$.  We see each neighborhood in this union is smaller than the last, so $U$ has at most
$$
\delta + (\delta - 1) + (\delta - 2) + \cdots + 1= {\delta + 1 \choose 2} 
$$
vertices.   While $U$ may not be all the vertices of the graph if $D$ is not strongly connected, it must contain a counterexample to the SNC, which gives the result.
\end{proof}

Kaneko and Locke~\cite{KanekoLocke01} showed that the SNC is true for all graphs with minimum degree at most $6$.   The previous corollary says that to extend this result to $7$, you could do so by showing there is no counterexample with $28$ vertices or fewer.  This is a tall order computationally, but a finite problem at least.

An \emph{in-regular} graph is a graph such that $|N_1^-(v)|$ is the same for all $v$.  Here we show that Conjecture~\ref{conj:subset} is true in the case of in-regular graphs.  Note that this proof unfortunately does not translate to the SNC since in-regular graphs are not closed under removal of edges and vertices, and therefore Lemma~\ref{lemma:contract} does not help. 

\begin{prop}
\label{prop:reg}
Given an in-regular digraph $D$ without loops or multiple edges, there exists a subset of vertices $S$ such that $d_1^+(S) \leq d_2^+(S)$.  
\end{prop}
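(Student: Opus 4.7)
The plan is to construct $S$ as an initial BFS ball from a suitable vertex. For any $v \in V(D)$, write $V_j = N_j^+(v)$ for $j \ge 1$, let $a_j = |V_j|$, and set $S_k = \{v\} \cup V_1 \cup \cdots \cup V_k$. Because $D$ has no $2$-cycle, every in-neighbor of a vertex in $V_j$ lies in $V_{j-1} \cup V_j \cup V_{j+1} \cup \cdots$, with at least one in $V_{j-1}$; combined with pairwise disjointness of the $V_j$ for $j \ge 1$, a direct case analysis yields $N_1^+(S_k) = V_1 \cup \cdots \cup V_{k+1}$ and $N_2^+(S_k) = V_{k+2}$. Hence $d_1^+(S_k) = a_1 + \cdots + a_{k+1}$ and $d_2^+(S_k) = a_{k+2}$, so $S = S_k$ witnesses the proposition exactly when $a_{k+2} \ge a_1 + \cdots + a_{k+1}$. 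If this holds for some starting vertex and some $k$ with $S_k$ a proper subset, we are done.

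Assume otherwise: for every $v$ and every admissible $k$ the partial sums $s_j = a_1 + \cdots + a_j$ satisfy $a_{k+2} < s_{k+1}$, so $s_{j+1} < 2 s_j$ and therefore $s_j < 2^{j-1} a_1$. I would first reduce to the strongly connected case by passing to a source strong component of $D$, which inherits in-regularity since all its in-edges are internal. In that case $s_L = n - 1$ where $L$ is the BFS depth, forcing $L > \log_2((n-1)/a_1)$. The averaging $\sum_u d_1^+(u) = rn$ lets us pick $v$ with $a_1 = d_1^+(v) \le r$, so the BFS has depth more than $\log_2((n-1)/r)$.

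The final ingredient is a counting argument using the per-layer in-regularity identity
\[
r\,a_{k+1} = e(V_k, V_{k+1}) + e(V_{k+1}, V_{k+1}) + \sum_{i > k+1} e(V_i, V_{k+1})
\]
together with the global count $\sum_w d_1^+(w) = rn$; summing these identities should produce a lower bound on some $a_{k+2}$ that contradicts the geometric upper bound $s_j < 2^{j-1} r$. The main obstacle is controlling the ``backward'' edges $e(V_i, V_{k+1})$ with $i > k+1$: they contribute to the in-degree budget of $V_{k+1}$ without feeding the forward BFS growth, so the contradiction must be extracted from a global edge count rather than a purely layer-by-layer estimate. I expect the proof to exploit the fact that any backward edge from $V_i$ must itself be paid for in the in-degree equation for some earlier layer, propagating the constraint all the way back to $V_1$ and thereby forcing one of the $a_j$ to be too large.
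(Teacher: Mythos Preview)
Your proposal has a genuine structural gap, and it stems from the choice of candidate sets. Under the paper's convention, $N_1^+(S)$ is the set of \emph{all} vertices (including those in $S$) at minimum out-distance $1$ from $S$, and you correctly compute $N_1^+(S_k)\supseteq V_1\cup\cdots\cup V_{k+1}$ while $N_2^+(S_k)=V_{k+2}$. So the inequality you need for $S_k$ is $a_{k+2}\ge a_1+\cdots+a_{k+1}$, which forces the BFS layers to at least double. In a finite graph this can hold for at most $O(\log n)$ values of $k$ and must eventually fail; in particular, for the terminal layer it always fails. Thus your ``assume otherwise'' branch is not an exceptional case to be dispatched with a counting trick: it is the entire problem. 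Your sketch for that branch explicitly says ``I expect the proof to exploit\ldots'', and indeed the in-regularity identity you write down does not, by itself, produce a contradiction with $s_j<2^{j-1}r$; the backward-edge terms $e(V_i,V_{k+1})$ can absorb essentially all of the in-degree budget without forcing any $a_{k+2}$ to be large. No completion along these lines is apparent.

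The underlying issue is that forward BFS balls are the wrong shape here: they make $d_1^+(S)$ enormous relative to $d_2^+(S)$. The paper instead takes $S_v=V(D)\setminus\bigl(N_1^-(v)\cup N_2^-(v)\bigr)$, the complement of an \emph{in}-ball, so that $N_1^+(S_v)\subseteq N_2^-(v)$ is small. Assuming a counterexample, the paper locates a vertex $v$ with $d_1^-(v)>d_2^-(v)$ by the averaging identity $\sum d_i^+=\sum d_i^-$, and then uses failure of $S_v$ to find $u\in N_1^-(v)$ with $N_1^-(u)\cup N_2^-(u)\subsetneq N_1^-(v)\cup N_2^-(v)$. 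Iterating this descent reaches a vertex $u^*$ with $d_1^-(u^*)\le d_2^-(u^*)$ whose first two in-neighbourhoods are strictly contained in those of $v$; in-regularity then gives $|N_1^-(u^*)|=|N_1^-(v)|$ and $|N_2^-(u^*)|\ge|N_2^-(v)|$, contradicting the strict containment. In-regularity enters only at this final comparison, not through any global edge count.
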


\begin{proof}  
Consider a minimum, strongly-connected counterexample $D$ to this proposition.   Since $D$ would also be a counterexample to the SNC, for every vertex $v$, we have $d_1^+(v) > d_2^+(v)$.   Since $\sum_{v \in V(D)} d_1^+(v) = \sum_{v \in V(D)} d_1^-(v)$ and $\sum_{v \in V(D)} d_2^+(v) = \sum_{v \in V(D)} d_2^-(v)$, $d_1^+(v) > d_2^+(v)$ implies that $\sum_{v \in V(D)} d_1^-(v) > \sum_{v \in V(D)} d_2^-(v)$.  Therefore we know there exists at least one vertex $v$ such that $d_1^-(v) > d_2^-(v)$.     Let $\mathcal V$ be the set of all vertices such that $d_1^-(v) > d_2^-(v)$.

For every $v \in \mathcal V$, set $S_v = V(D) \setminus (N_1^-(v) \cup N_2^-(v))$.  Since $D$ is a counterexample, we know that $d_1^+(S_v) > d_2^+(S_v)$.  Notice that  
\begin{itemize}
\item $N_1^+(S_v) \subseteq N_2^-(v)$,
\item there are more vertices in $N_1^-(v)$ than $N_2^-(v)$, and
\item there are fewer vertices in $N_2^+(S_v)$ than $N_1^+(S_v)$.
\end{itemize}
Therefore, there are fewer vertices in $N_2^+(S_v)$ than in $N_1^-(v)$, so there must be some vertex $u$ in $N_1^-(v)$ not in $N_2^+(S_v)$.   This implies the first two in-neighborhoods of $u$ are contained in the first two in-neighborhoods of $v$.  Notationally, $\left(N_1^-(u) \cup N_2^-(u)\right) \subsetneq \left(N_1^-(v) \cup N_2^-(v)\right)$.    See Figure~\ref{fig:prop-diagram} for a diagram of some of these sets.

\begin{figure}
\begin{center}
\includegraphics[scale=0.6]{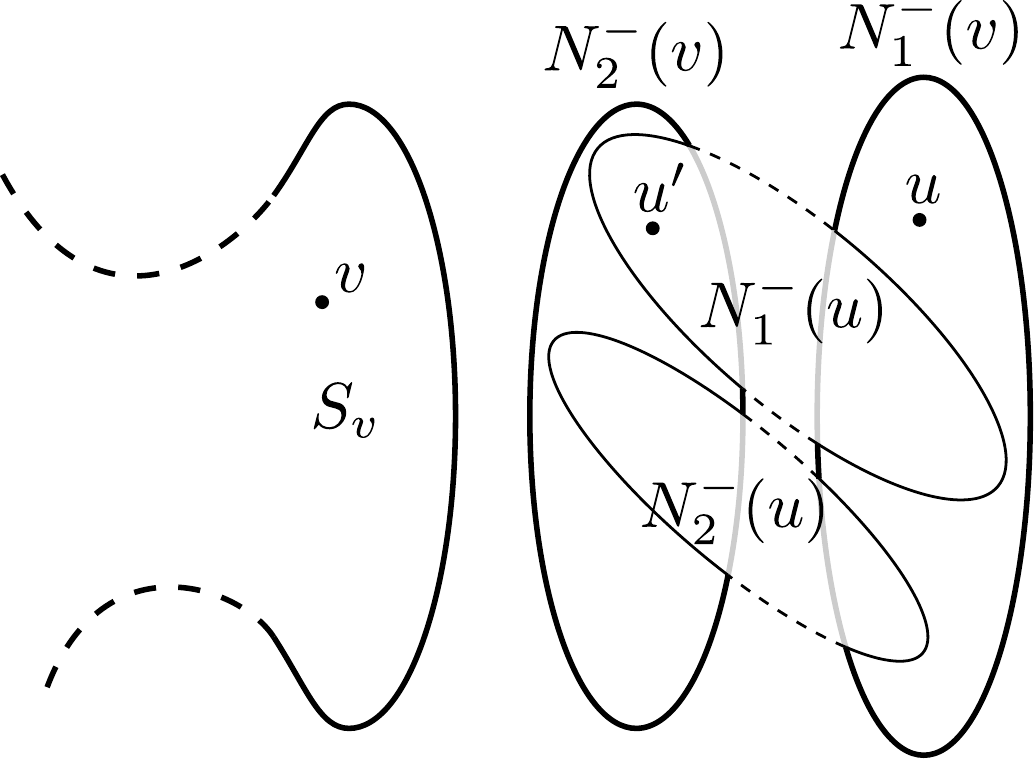}
\end{center}
\caption{\label{fig:prop-diagram}  A diagram of some of the sets used in the proof of Proposition~\ref{prop:reg}.}
\end{figure}

If $u \in \mathcal V$, then we can apply the same argument and get a $u'$ such that the first two in-neighborhoods of $u'$ are contained in the first two in-neighborhoods of $u$.  By repeating this argument, eventually we find a $u^*$ whose first two in-neighborhoods are contained in the first two  in-neighborhoods of $v$, but $u^* \notin \mathcal V$.    So $\left(N_1^-(u^*) \cup N_2^-(u^*)\right) \subsetneq \left(N_1^-(v) \cup  N_2^-(v)\right)$.  However, since $D$ is in-regular, we have $|N_1^-(u^*)| = |N_1^-(v)|$, and $|N_2^-(u^*)| \geq |N_2^-(v)|$, and so this containment is a contradiction. 
\end{proof}

\section{Approximate Second Neighborhood for $m$-free digraphs}

We know prove the main result, which is a restatement of Theorem~\ref{thm:m-free}.

\begin{thm}
\label{thm:reverse-radius}
Any $m$-free digraph $D$ has a vertex $v$ such that $d_2^+(v) \geq \lambda d_1^+(v)$ for $\lambda$ any real number between $0$ and $1$ satisfying
$$
\lambda^{m} + \lambda^{m-1} \leq 1.
$$
\end{thm}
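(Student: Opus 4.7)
The plan is a proof by contradiction using Lemma~\ref{lemma:contract} iteratively. Assume $D$ is $m$-free and a $\lambda$-counterexample to the SNC with $\lambda^m + \lambda^{m-1} \leq 1$. Since edge removal preserves $m$-freeness, we may take $D$ to be edge-minimal, so Lemma~\ref{lemma:contract} is available; as in the paragraph preceding that lemma, we may also assume $D$ is strongly connected.

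Fix a vertex $v$, set $A_k = N_k^+(v)$ and $a_k = |A_k|$, and consider the nested subsets $S_k := \{v\} \cup A_1 \cup \cdots \cup A_{k-1}$. The first step is to verify, using the $m$-free hypothesis (so that any cycle through $v$ has length at least $m+1$), that $N_1^+(S_k) = A_1 \cup \cdots \cup A_k$ and $N_2^+(S_k) = A_{k+1}$ for every $k \geq 1$. Both $\supseteq$ inclusions follow from the predecessor-on-a-shortest-path argument; the reverse inclusions come from bounding $d(v,u)$ for $u$ in these sets and noting that any contribution from $u = v$ only arises via a cycle through $v$ of length at most $k+1$, which is consistent with $v$ lying in some $A_j$ with $j \leq k+1$. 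Applying Lemma~\ref{lemma:contract} then yields
\[
a_{k+1} < \lambda (a_1 + a_2 + \cdots + a_k), \qquad k \geq 1.
\]
Setting $T_k := \sum_{i=1}^k a_i$, this rearranges to $T_{k+1} < (1+\lambda) T_k$, giving $T_k < (1+\lambda)^{k-1} a_1$ and $a_k < \lambda(1+\lambda)^{k-2} a_1$ for $k \geq 2$.

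Strong connectivity supplies a shortest cycle through $v$ of length $g = d(v,v) \geq m+1$, so $v \in A_g$ and $a_g \geq 1$. Choosing $v$ on a shortest cycle of $D$ itself allows us to take $g = m+1$, and the recurrence then gives $a_1 > 1/(\lambda(1+\lambda)^{m-1})$.

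The hard part will be converting these inequalities into the sharp algebraic contradiction $\lambda^{m-1}(1+\lambda) > 1$, equivalent to $\lambda^m + \lambda^{m-1} > 1$. My plan is to run the same recurrence from each vertex $u_0 = v, u_1, \ldots, u_m$ along a shortest $(m+1)$-cycle. Because the cycle is a geodesic (forced by the $m$-free condition), $d(u_i, v) = m+1-i$, so $v \in N_{m+1-i}^+(u_i)$, and the recurrence yields a lower bound $d_1^+(u_i) > 1/(\lambda(1+\lambda)^{m-1-i})$. The matching upper bound $d_1^+(u_i) \leq |A_1 \cup \cdots \cup A_{i+1}| - 1 < (1+\lambda)^i a_1$ follows from the fact that $u_i$'s out-neighbors lie in $A_1 \cup \cdots \cup A_{i+1}$. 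A naive pairing of these bounds only reproduces the estimate $a_1 > 1/(\lambda(1+\lambda)^{m-1})$ already in hand, so the final step requires a sharper global argument---for example, counting edges between consecutive out-neighborhood layers and summing the resulting constraints over all cycle vertices---to extract the tight inequality $\lambda^{m-1}(1+\lambda) > 1$.
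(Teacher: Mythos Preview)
There is a genuine gap: the ``hard part'' you flag at the end is in fact the whole argument, and the cycle-chasing plan you sketch does not close it. The recurrence you derive, together with the existence of a short cycle through $v$, only produces a lower bound such as $a_1 > 1/(\lambda(1+\lambda)^{m-1})$; as you observe yourself, pairing your upper and lower bounds on $d_1^+(u_i)$ along the cycle reproduces this same estimate for every $i$ and yields no contradiction. All of your information lives in the forward neighborhood structure rooted at a single vertex, and nothing there forces any absolute size to be small; the vague appeal to ``counting edges between consecutive layers'' is not a proof. (There is also a smaller issue: a minimal $\lambda$-counterexample that happens to be $m$-free need not contain any $(m{+}1)$-cycle, so you cannot simply take $g=m+1$.)

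The paper supplies the missing global idea, and it is quite different from yours. From the decay $d_{i+1}^+(v) < \lambda\, d_i^+(v)$ one obtains $d_m^+(v) < \gamma\,|A_v|$ for every $v$, where $A_v = \bigcup_{i=1}^{m-1} N_i^+(v)$ and $\gamma = \lambda^{m-1}(1-\lambda)/(1-\lambda^{m-1})$. An averaging step, using $\sum_v d_i^+(v) = \sum_v d_i^-(v)$, then produces a vertex $w$ with $d_m^-(w) < \gamma\,|B_w|$ for $B_w = \bigcup_{i=1}^{m-1} N_i^-(w)$. Now set $S^* = \bigcup_{i \ge m+1} N_i^-(w)$: one checks $N_1^+(S^*) \subseteq N_m^-(w)$, so $|N_1^+(S^*)| < \gamma\,|B_w|$, and iterating Lemma~\ref{lemma:contract} gives $|N_k^+(S^*)| < \lambda^{k-1}\gamma\,|B_w|$. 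Since these forward neighborhoods of $S^*$ must cover $B_w$ by strong connectivity, summing over $k \ge 2$ forces $\frac{\lambda}{1-\lambda}\,\gamma > 1$, which simplifies exactly to $\lambda^m + \lambda^{m-1} > 1$. This backward-then-forward maneuver, pivoting at an averaged vertex $w$, is the idea your outline lacks. Note also that the lemma, as the paper applies it, yields the sharper decay $a_{k+1} < \lambda a_k$ (choose $S$ so that $N_1^+(S) = A_k$, not $A_1 \cup \cdots \cup A_k$); your weaker inequality $a_{k+1} < \lambda T_k$ would not produce the clean value of $\gamma$ needed above.
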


\begin{proof}
We start with a rough outline of the proof.   We will assume $D$ is a minimal $\lambda$-counterexample to this theorem, and thus every vertex has a second neighborhood that is smaller (by a factor of $\lambda$) than the first neighborhood.   Applying Lemma~\ref{lemma:contract}, we see this implies that every subset of vertices has a smaller second neighborhood in this way.   This means that if we go forward from a vertex $v$, every neighborhood we look at is smaller than the last.   Using an averaging argument, we can find a vertex $w$ where moving backwards, these neighborhoods get  smaller as well.   Starting at $w$ and moving backwards (looking at $N_1^-(w)$, $N_2^-(w)$, $N_3^-(w)$, etc.) we find these neighborhoods get smaller and smaller, until we reach $N_{r-1}^-(w)$.   We then reverse directions and move forward, and these neighborhoods will be even smaller yet.   These neighborhoods moving forward will be so small that, even though there are potentially more of them, they can only cover all the vertices that were in-neighborhoods moving backwards $\left(N_1^-(w) \cup N_2^-(w) \cup N_3^-(w) \cup \cdots \right)$ if $\lambda$ is sufficiently big, which gives the result.  Refer to Figure~\ref{fig:thm-diagram} for an illustration of some of the sets that will be involved in the proof in the case $r = 4$.

\begin{figure}
\begin{center}
\includegraphics[scale=0.6]{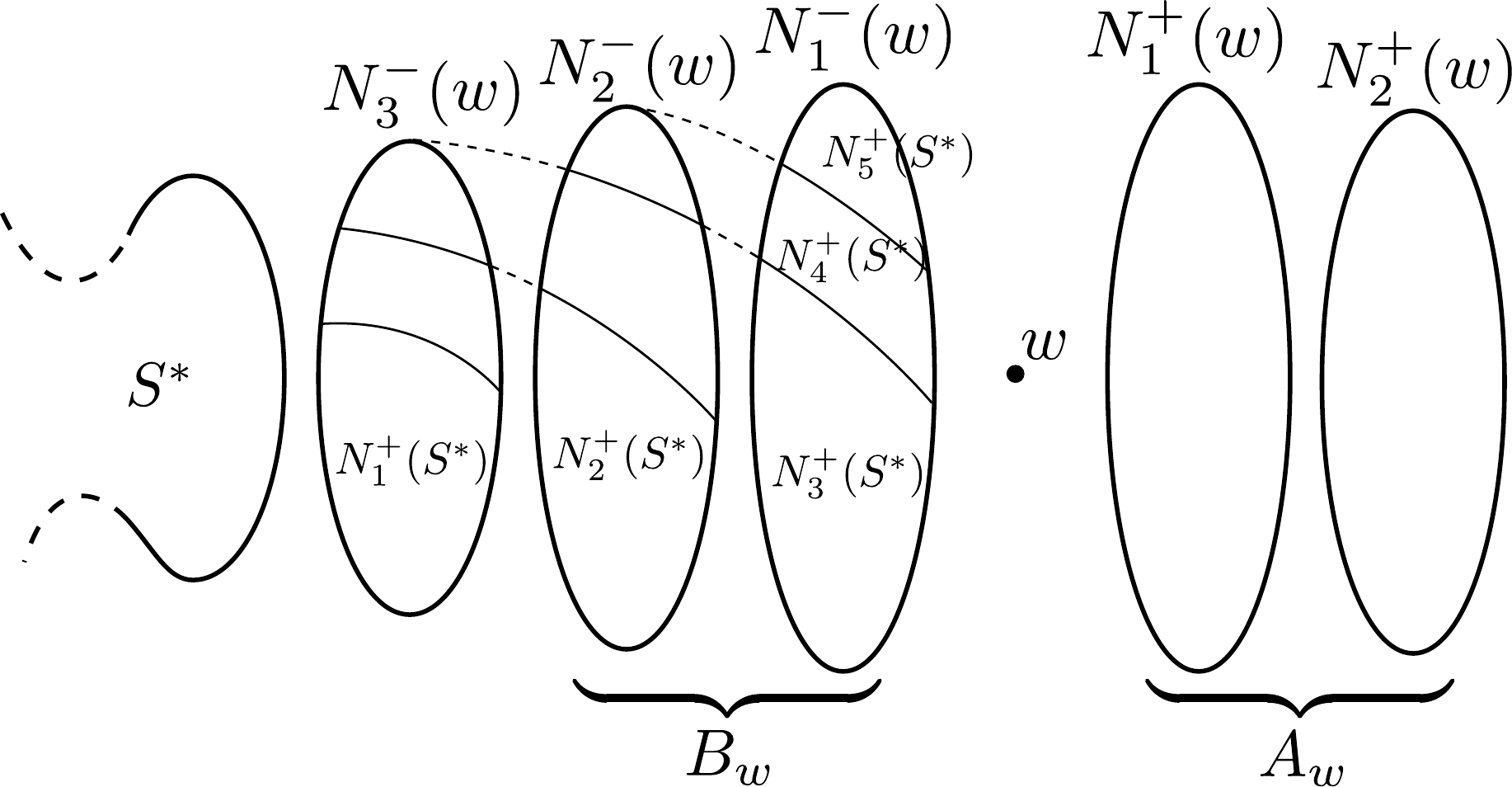}
\end{center}
\caption{\label{fig:thm-diagram}  A diagram of some of the sets for the case $r = 4$ used in the proof of Theorem~\ref{thm:reverse-radius}.}
\end{figure}

Consider a counterexample $D'$ to the statement;  that is, for a valid $\lambda$, $D'$ is $m$-free and satisfies $d_2^+(v) < \lambda d_1^+(v)$ for all vertices $v$.  Since it is a $\lambda$-counterexample to the SNC, it contains a strongly-connected minimal $\lambda$-counterexample $D$ to the SNC.   Since deleting edges and vertices cannot create a smaller cycle, $D$ is still $m$-free.    By Lemma~\ref{lemma:contract}, for every proper, non-empty subset of vertices $S$, $d_2^+(S) < \lambda d_1^+(S)$.  As we have seen, this implies that $d_{i+1}^+(v) < \lambda d_i^+(v)$ for all $i$ such that $d_i^+(v)$ is nonzero.  One can then show that this implies $d_{m}^+(v) < \lambda^{m-i}  d_{i}^+(v)$, and hence $d_i^+(v) >  \frac{1}{\lambda^{m-i}} d_{m}^+(v)$.

Let $A_v = \bigcup_{i = 1}^{m-1} N_i^+(v)$.    We see
\begin{align*}
|A_v| & =  \sum_{i = 1}^{m-1} d_i^+(v) \\
& > \sum_{i = 1}^{m-1} \frac{1}{\lambda^{r-1-i}} d_{r-1}^+(v) \\
& = \frac{1 - \lambda^{m-1}}{\lambda^{m-1}(1 - \lambda)} d_{m}^+(v)
\end{align*}
If we set $\gamma = \frac{\lambda^{m-1}(1 - \lambda)}{1 - \lambda^{m-1}}$, then we see that for every vertex $v$, $d_{m}^+(v) < \gamma |A_v|$.

Let $B_v = \bigcup_{i = 1}^{m-1} N_i^-(v)$.  Since $\sum_{v \in V} d_i^+(v) = \sum_{v \in V} d_i^-(v)$ for all $i$, we see that on average $B_v$ is the same size as $A_v$, and $d_{m}^+(v)$ is on average the same size as $d_{m}^-(v)$.  Therefore, since $d_{m}^+(v) < \gamma |A_v|$ for every vertex,  there must exist some vertex $w$ such that  $d_{m}^-(w) < \gamma |B_w|$.   

Note that since $D$ is strongly connected and $m$-free, for any $v$, $N_i^+(v)$ is non-empty for $i \leq m+1$.  This is because of how we defined $N_i^+(v)$, there is some $i$ such that $v \in N_i^+(v)$, and being $m$-free means this cannot happen until at least $i = m+1$.  Similarly, $N_i^-(v)$ is non-empty for $i \leq m+1$. 

Since $N_{m+1}^-(w)$ is non-empty, we set $S^* = \bigcup_{i = m+1}^\infty N_i^-(w)$.  Notice that we take an infinite union simply  because we want to keep going as long as the in-neighborhoods of $w$ are non-empty.   Since $N_1^+(S^*) \subseteq N_{m}^-(w)$, we have $|N_1^+(S^*)| \leq d_{m}^-(w) < \gamma |B_w|$.   By repeated use of $|N_2^+(S)| < \lambda |N_1^+(S)|$ for appropriate $S$, we see that $|N_2^+(S^*)| < \lambda \gamma |B_w|$, $|N_3^+(S^*)| < \lambda^{2} \gamma |B_w|$, etc., and in general, $|N_k^+(S^*)| < \lambda^{k-1} \gamma |B_w|$.   

Because this graph is strongly connected, these $N_k^+(S^*)$ must eventually cover $B_w$.   Therefore,
\begin{align*}
\sum_{i = 2}^\infty |N_i^+(S^*)| \geq |B_w| \\
\sum_{i = 2}^\infty \lambda^{i-1} \gamma |B_w| > |B_w| \\
\sum_{i = 2}^\infty \lambda^{i-1} \gamma  > 1 \\
\frac{\lambda}{1 - \lambda} \gamma > 1 \\
\frac{\lambda}{1 - \lambda} \cdot \frac{\lambda^{m-1}(1 - \lambda)}{1 - \lambda^{m-1}}  > 1 \\
\lambda^{m} > 1 - \lambda^{m-1}.
\end{align*}
This gives the result.
\end{proof}

\subsection*{Acknowledgments}

The author would like to thank Debbie Seacrest for her insights and valuable edits.

\bibliographystyle{plain}
\bibliography{2ndNeighborhood}

\end{document}